\numberwithin{equation}{section}
\theoremstyle{definition}
\newtheorem{theorem}{Theorem}[section]
\newtheorem{proposition}[theorem]{Proposition}
\newtheorem{lemma}[theorem]{Lemma}
\newtheorem*{remark}{Remark}
\newtheorem{definition}[equation]{Definition}
\newcommand{\Res}{\operatorname{Res}}
\newcommand{\ZZ}{\mathbb{Z}}
\newcommand{\h}{\mathfrak{h}}
\newcommand{\HH}{\mathcal{H}}
\newcommand{\Sym}{\operatorname{Sym}}
\renewcommand{\char}{\text{char}}
\newcommand{\sspan}{\text{span}}
\newcommand{\PP}{\mathbb{P}}
\begin{document}
\title[Polynomial representation of type $A_{n - 1}$ rational Cherednik algebra in characteristic $p \mid n$]{The polynomial representation of the type $A_{n - 1}$ rational Cherednik algebra in characteristic $p \mid n$}
\author{Sheela Devadas}
\email[S. Devadas]{sheelad@stanford.edu}
\author{Yi Sun}
\email[Y. Sun]{yisun@math.mit.edu}
\date{\today}

\begin{abstract}
We study the polynomial representation of the rational Cherednik algebra of type $A_{n-1}$ with generic parameter in characteristic $p$ for $p \mid n$. We give explicit formulas for generators for the maximal proper graded submodule, show that they cut out a complete intersection, and thus compute the Hilbert series of the irreducible quotient. Our methods are motivated by taking characteristic $p$ analogues of existing characteristic $0$ results.
\end{abstract}

\maketitle
\setcounter{tocdepth}{1}
\tableofcontents

\section{Introduction}

The present work presents a detailed study of the polynomial representation of the type $A_{n - 1}$ rational Cherednik algebra over a field of characteristic $p$ dividing $n$.  Rational Cherednik algebras were introduced by Etingof-Ginzburg in \cite{EG} as a rational degeneration of the double affine Hecke algebra dependent on two parameters $\hbar$ and $c$.  In characteristic $0$, their type $A$ representation theory has been the subject of extensive study.  We refer the reader to \cite{EM} for a survey of these results. 

In characteristic $p$ and especially in the modular case, much less is known about the representation theory of the rational Cherednik algebra.  In this paper, we consider the modular case $p \mid n$.  For $\hbar = 1$ and generic $c$, we provide a complete characterization of the irreducible quotient of the polynomial representation.  We give explicit generators for the unique maximal proper graded submodule $J_c$, show that the irreducible quotient is a complete intersection, and compute its Hilbert series.

Our techniques are inspired by taking characteristic $p$ analogues of results about Cherednik algebras in characteristic $0$.  In particular, our explicit expression for generators of $J_c$ was obtained by converting expressions with complex residues to equivalent expressions dealing only with formal power series which may be interpreted in characteristic $p$.  While we restrict our study to the polynomial representation in type $A$, we view it as a test case for this philosophy, which we believe may admit wider application.

We now state our results precisely and explain their relation to other recent work.

\subsection{The rational Cherednik algebra in positive characteristic}

We work over an algebraically closed field $k$ of characteristic $p > 0$ and fix $n$ so that $p \mid n$.  Let $S_n$ denote the symmetric group on $n$ elements, $V = k^n$ its permutation representation, and $s_{ij} \in S_n$ the transposition permuting $i$ and $j$.  Fix a basis $y_1,\ldots,y_{n}$ for $V$ and a dual basis $x_1, \ldots, x_{n}$ for $V^*$.  Let $\h$ and $\h^*$ be the dual $(n - 1)$-dimensional $S_n$-representations which are subrepresentation and quotient of $V$ and $V^*$, respectively given by
\[
\h = \sspan\{y_i - y_j \mid i \neq j\} \text{ and } \h^* = V^*/(x_1 + \cdots + x_{n}).
\]
The action of $S_n$ on $\h$ and $\h^*$ is given explicitly by natural permutation of basis vectors.

Fix constants $\hbar$ and $c$ in $k$.  Denoting the tensor algebra of $\h \oplus \h^*$ by $T(\h \oplus \h^*)$, the \textit{type $A_{n - 1}$ rational Cherednik algebra} $\HH_{\hbar, c}(\h)$ is the quotient of $k[S_n] \ltimes T(\h \oplus \h^*)$ by the relations
\begin{align*}
[x_i,x_j]=0, \quad [y_i-y_j,y_l-y_m] = 0, \quad [y_i-y_j,x_i] = \hbar- cs_{ij}-c \sum_{t \ne i} s_{it},\quad [y_i-y_j,x_l] =cs_{il}-cs_{jl}\end{align*}
for all $1 \le i , j, l, m \le n$ such that $i,j,l$ are distinct and $l \neq m$. There is a $\ZZ$-grading on $\HH_{\hbar,c}(\h)$ given by setting $\deg x=1$ for $x \in \h^*$, $\deg y = -1$ for $y \in \h$, and $\deg g=0$ for $g \in k[S_n]$.  In addition, $\HH_{\hbar, c}(\h)$ admits a PBW decomposition 
\[
\HH_{\hbar,c}(\h) \simeq \Sym(\h) \otimes_k k[S_n] \otimes_k \Sym(\h^*).
\]
For any $\alpha \ne 0$, $\HH_{\hbar,c}(\h)$ and $\HH_{\alpha\hbar,\alpha c}(\h)$ are isomorphic as algebras, so only the cases $\hbar = 0$ or $\hbar = 1$ need be considered.  In this paper, we restrict our attention to $\hbar = 1$.

\subsection{Polynomial representation of the rational Cherednik algebra}

The rational Cherednik algebra $\HH_{1, c}(\h)$ admits a $\ZZ_{\geq 0}$-graded representation on the polynomial ring $A = \Sym(\h^*)$, known as the \textit{polynomial representation}.  The actions of $\Sym(\h^*)$ and $k[S_n]$ on $A$ are by left multiplication and the $S_n$ action on $\h^*$, respectively.  The action of $\Sym(\h)$ is implemented by letting $y \in \h$ act by the \textit{Dunkl operator}
\begin{align*}
D_y =  \partial_y  - \sum_{m < l} c  \langle y, x_m-x_l \rangle \frac{1-s_{ml}}{x_m - x_l},
\end{align*}
where we note that $\frac{1-s_{ml}}{x_m - x_l}f$ is a polynomial for $f \in A$. Explicitly, we have  
\[
D_{y_i - y_j} =  \partial_{y_i-y_j}-c\sum_{m \ne i} \frac{1-s_{mi}}{x_i-x_m}+c\sum_{m \ne j} \frac{1-s_{mj}}{x_j-x_m},
\] \noindent
where $\partial_{y_i-y_j}$ is the differential operator satisfying $\partial_{y_i-y_j}(x) = \langle y_i-y_j,x\rangle$ for all $x \in \h^*$.  

\subsection{Maximal proper graded submodule and irreducible quotient of polynomial representation}

As described in \cite[Section 2.5]{BC1}, there is a contravariant form 
\[
\beta_c: \Sym(\h^*) \otimes \Sym(\h) \to k
\]
defined by setting $\beta_c(1, 1) = 1$ and imposing for all $x \in \h^*, y \in \h, f \in \Sym(\h^*), g \in \Sym(\h)$ that
\[
\beta_c(xf,g)=\beta_c(f,D_x(g)) \qquad \text{ and } \qquad \beta_c(f,yg) = \beta_c(D_y(f),g).
\]
where for $x \in \h^*$ we denote by $D_x$ the Dunkl operator implementing the action of $\HH_{1, c}(\h^*)$ on its polynomial representation $\Sym(\h)$. 

The polynomial representation $\Sym(\h^*)$ has unique maximal graded proper submodule $J_c = \ker(\beta_c)$.  By the definition of $\beta_c$, $J_c$ contains the ideal generated by all homogeneous vectors $f \in A$ of positive degree that lie in the kernel of all Dunkl operators $D_y$.  Such $f$ are known as \textit{singular vectors}.  The quotient $L = A/J_c$ is an irreducible representation of $\HH_{1,c}(\h)$.  It inherits a $\ZZ_{\geq 0}$-grading from $A$, and for $L_j$ the degree $j$ subspace of $L$, we may define its Hilbert series as
\[
h_L(t) = \sum_{j \geq 0} \dim L_j t^j.
\]

\subsection{Statement of the main result}

For a formal power series $r(z)$, we denote by $[z^l] r(z)$ the coefficient of $z^l$ in $r(z)$.  Throughout the paper, we will consider formal power series in $z$ considered as expansions of rational functions around $z = 0$. For $i = 1, \ldots, n - 1$, define the formal power series
\[
F_i(z)=\frac{1}{1-x_iz} \sum_{m=0}^{p-1} \binom{c}{m}\left(\prod_{j=1}^{n} (1-x_jz) - 1\right)^m
\]
for $\binom{c}{m} = \frac{c (c - 1) \cdots (c - m + 1)}{m!}$.  Denote by $f_i$ the coefficients $f_i = [z^p] F_i(z)$.

\newtheorem*{thm:main}{Theorem \ref{thm:main}} \begin{thm:main}
For generic $c$, $f_1, \ldots, f_{n-1}$ are linearly independent and generate the maximal proper graded submodule $J_c$ of the polynomial representation for $\HH_{1, c}(\h)$.  The irreducible quotient $L = A/J_c$ is a complete intersection with Hilbert series 
\[
h_L(t) = \left(\frac{1-t^p}{1-t}\right)^{n-1}.
\]
\end{thm:main}
\begin{remark}
In Theorem \ref{thm:main}, by generic $c$ we mean $c$ avoiding finitely many values.
\end{remark}

\subsection{Connections to previous work}

Our study is motivated by previous work on the representation theory of the type $A$ rational Cherednik algebra in both characteristic $0$ and $p$.  The type $A$ non-modular case $p \gg n$ was studied in \cite{BFG}, and some properties of the maximal proper graded submodule of the polynomial representation were given in both modular and non-modular cases in \cite{BC1}.  In the modular case $p \mid n$, for $p = 2$ the polynomial representation associated to the $n$-dimensional permutation representation was studied in \cite{L}.
\begin{theorem}[{\cite[Theorem 5.1]{L}}] \label{thm:lian}
The irreducible quotient of the polynomial representation associated to the $n$-dimensional permutation representation is a complete intersection with Hilbert series
\[
h(t) = (1 + t)^n (1 + t^2).
\]
The corresponding maximal proper graded submodule is generated by $n - 1$ elements of degree $2$ and one element of degree $4$. 
\end{theorem}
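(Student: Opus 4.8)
The plan is to sandwich the irreducible quotient $L = A/J_c$ between a complete intersection and a base-point computation. Since each $f_i$ is homogeneous of degree $p$ and $J_c$ is an ideal of $A$ (being closed under multiplication by $\h^*$), once I know that the $f_i$ are singular vectors I obtain $(f_1, \ldots, f_{n-1}) \subseteq J_c$, hence a graded surjection $A/(f_1, \ldots, f_{n-1}) \twoheadrightarrow L$ and the upper bound $\dim L_j \leq \dim \bigl(A/(f_1, \ldots, f_{n-1})\bigr)_j$. The theorem then follows from three facts: (a) the $f_i$ are singular; (b) for generic $c$ they form a regular sequence, so $A/(f_1, \ldots, f_{n-1})$ is a complete intersection with Hilbert series $\left(\frac{1-t^p}{1-t}\right)^{n-1}$; and (c) the matching lower bound $\dim L_j \geq \left[t^j\right]\!\left(\frac{1-t^p}{1-t}\right)^{n-1}$. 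Given these, the surjection above is an isomorphism, so $J_c = (f_1, \ldots, f_{n-1})$, the $f_i$ are linearly independent, and $h_L$ is as claimed.

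For (a) I would compute at the level of generating functions and show $\left[z^p\right] D_{y_a - y_b} F_i(z) = 0$ for all $a, b$. The engine is the identity $\partial_{y_a} \prod_{j=1}^n (1 - x_j z) = \frac{-z}{1 - x_a z} \prod_{j=1}^n (1 - x_j z)$, which feeds cleanly through the powers $\bigl(\prod_j (1 - x_j z) - 1\bigr)^m$, together with the fact that the divided differences $\frac{1 - s_{ma}}{x_a - x_m}$ fix the symmetric factor $\sum_m \binom{c}{m}(\cdots)^m$ and only permute the rational factor $\frac{1}{1 - x_i z}$. Two modular features make the bookkeeping close: the coefficients $\binom{c}{m}$ are defined in $k$ exactly for $m \leq p - 1$, which is why the series is truncated; and the relation $x_1 + \cdots + x_n = 0$ in $\h^*$ kills the linear term of $\prod_j(1 - x_j z) - 1$. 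I expect the derivative and divided-difference contributions to telescope so that the only surviving discrepancy sits in the coefficient of $z^p$ and vanishes in characteristic $p$. This identity is the technical heart of the argument and is where the passage from complex residues to formal power series is used.

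For (b) and (c) I would exploit the base point $c = 0$, where everything is explicit. At $c = 0$ only the $m = 0$ term of $F_i(z)$ survives, so $f_i = \left[z^p\right]\frac{1}{1 - x_i z} = x_i^p$; in the coordinates $x_1, \ldots, x_{n-1}$ on $\h^*$ these cut out the origin and form a regular sequence. Moreover the form $\beta_0$ is the standard differentiation pairing, diagonal in the monomial basis with entry $\alpha! = \prod_i \alpha_i!$ on $x^\alpha$, so its rank in degree $j$ counts monomials with every exponent at most $p - 1$; that is, $\dim L_j(0) = \left[t^j\right]\!\left(\frac{1-t^p}{1-t}\right)^{n-1}$, and indeed $J_0 = (x_1^p, \ldots, x_{n-1}^p)$. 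Now upper semicontinuity of fiber dimension for the family $V(f_1, \ldots, f_{n-1}) \subseteq \h$ over the $c$-line, applied at $c = 0$ where the fiber is the origin, shows the fiber stays zero-dimensional for all but finitely many $c$; being $n - 1 = \dim \h$ homogeneous elements cutting out the origin in the polynomial ring $A$, the $f_i$ are then a homogeneous system of parameters and hence a regular sequence, giving (b). Simultaneously, the entries of the Gram matrix of $\beta_c$ in degree $j$ are polynomial in $c$, so $\dim L_j = \rank \beta_c|_j$ is lower semicontinuous; its generic value is therefore at least its value $\dim L_j(0)$ at the base point, giving (c).

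Combining, for generic $c$ the upper and lower bounds coincide with $\left[t^j\right]\!\left(\frac{1-t^p}{1-t}\right)^{n-1}$, so $A/(f_1, \ldots, f_{n-1}) \twoheadrightarrow L$ is an isomorphism in every degree. This identifies $J_c$ with $(f_1, \ldots, f_{n-1})$, shows $L$ is the complete intersection, and yields the stated Hilbert series. The main obstacle is step (a): the remaining steps are formal consequences of the single base-point computation at $c = 0$ together with the two semicontinuity principles, whereas the singular-vector identity requires the explicit generating-function manipulation that constitutes the core contribution.
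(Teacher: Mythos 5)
The statement you were asked to prove is Theorem~\ref{thm:lian}, quoted from \cite[Theorem 5.1]{L}: it concerns the polynomial representation $\Sym(V^*)=k[x_1,\dots,x_n]$ of the rational Cherednik algebra attached to the \emph{full $n$-dimensional permutation representation} $V$ in characteristic $p=2$, and it asserts an irreducible quotient with Hilbert series $(1+t)^n(1+t^2)$ whose maximal proper graded submodule needs $n$ generators: $n-1$ in degree $2$ and one in degree $4$. Your proposal instead proves the paper's main result, Theorem~\ref{thm:main}, about the $(n-1)$-dimensional representation $\h$: every ingredient you use --- the ring $A=\Sym(\h^*)$ with the relation $x_1+\cdots+x_n=0$, the series $F_i(z)$, the $n-1$ singular vectors of degree $p$, the target Hilbert series $\left(\frac{1-t^p}{1-t}\right)^{n-1}$ --- lives in that setting. (For what it is worth, your argument for that theorem is essentially sound and close to the paper's own: the paper also anchors at $c=0$, where $f_i=x_i^p$, and uses semicontinuity in $c$; it differs only in getting the lower bound from \cite[Proposition 3.4]{BC1} rather than from lower semicontinuity of the rank of $\beta_c$.) But Theorem~\ref{thm:lian} is a different statement --- one this paper does not prove at all, only cites --- and your proof does not establish it.

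The mismatch is not cosmetic, and your method does not transfer. First, in $n$ variables the $n-1$ degree-$2$ elements your construction yields can never cut out a finite-dimensional quotient (the projective variety they define has dimension at least $0$), so the upper-bound half of your argument is vacuous; the degree-$4$ generator is essential, and nothing in your generating-function construction produces it. Second, the $c=0$ anchor gives the wrong lower bound for the permutation representation: at $c=0$ the Dunkl operators are plain partial derivatives, so $J_0=(x_1^2,\dots,x_n^2)$ and the quotient has Hilbert series $(1+t)^n$, which is \emph{strictly smaller} than the claimed generic series $(1+t)^n(1+t^2)$. In other words, for $V$ (unlike for $\h$, where $\sum_i x_i=0$ makes both difficulties disappear) the point $c=0$ is itself non-generic --- the rank of $\beta_c$ jumps upward there --- so semicontinuity anchored at $c=0$ yields only the bound $(1+t)^n$ and can never meet any upper bound equal to $(1+t)^n(1+t^2)$. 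Proving Theorem~\ref{thm:lian} requires exhibiting an additional singular vector in degree $p^2=4$, built using the power sum $x_1+\cdots+x_n$ (which is nonzero in $\Sym(V^*)$), and showing that all $n$ elements together form a regular sequence; that is the content of Lian's work in \cite{L}, and it is precisely the part of his conjecture that the present paper's methods, and yours, do not yet reach.
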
 
It was further conjectured by Lian in \cite[Conjecture 5.2]{L} that for all $p$ the corresponding irreducible is a complete intersection with $J_c$ having $n - 1$ generators in degree $p$ and a single generator in degree $p^2$.  Our results are consistent with the restriction of Lian's conjecture to the case when $\h$ is the $(n - 1)$-dimensional quotient.  It would be interesting to extend our work to prove Lian's conjecture in full.  For general $p \mid n$, a submodule of the maximal proper graded submodule was computed in \cite[Proposition 6.1]{DS}.

In characteristic $0$, our results parallel the explicit decomposition of the polynomial representation of the type $A$ rational Cherednik algebra given in \cite{BEG, CE}.  There, the polynomial representation is irreducible unless $c = \frac{r}{n}$ for some integer $r$, and an explicit set of generators of the maximal proper graded submodule is known.
\begin{proposition}[{\cite[Proposition 3.1]{CE}}] \label{prop:ce}
If $\char(k) = 0$ and $c = \frac{r}{n}$, the maximal proper graded submodule $J_c \subset A$ of the polynomial representation $A$ of $\HH_{1,c}(\h)$ is generated by
\[
\Res_\infty\left[\frac{dz}{z-x_j} \prod_{i=1}^{n} (z-x_i)^c\right] \text{ for $j=1,\dots,n-1$}.
\]
\end{proposition}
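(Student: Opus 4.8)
The plan is to prove the two halves of the statement in turn: first that each generator is a singular vector, so that the ideal they span lies in $J_c$; then that this containment is an equality, by a Hilbert series comparison. Throughout write $\Phi(z)=\prod_{i=1}^n(z-x_i)^c$ and $\Phi_j(z)=\Phi(z)/(z-x_j)$, regarded as Laurent expansions at $z=\infty$ with coefficients in $A=\Sym(\h^*)$. Since $c=r/n$ gives $nc=r\in\ZZ_{\ge 0}$, the expansion of $\Phi$ has only integer powers of $z$ with leading term $z^r$, so $g_j:=\Res_\infty\left[\Phi_j(z)\,dz\right]=-[z^{-1}]\Phi_j(z)$ is a well-defined homogeneous element of $A$ of degree $r$. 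I also abbreviate $D_{y_k}=\partial_{x_k}-c\sum_{m\ne k}\frac{1-s_{km}}{x_k-x_m}$, so that $D_{y_k-y_l}=D_{y_k}-D_{y_l}$.

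For the first half, the Dunkl operators involve only the $x$-variables and hence commute with the coefficient extraction $[z^{-1}]$; it therefore suffices to compute $D_{y_k}\Phi_j$ as a Laurent series in $z$ and check that its residue at infinity vanishes. The key identities are $\partial_{x_k}\Phi=-\frac{c}{z-x_k}\Phi$ and $s_{km}\Phi_j=\Phi_{s_{km}(j)}$, the latter because $\Phi$ is symmetric. Feeding these into $D_{y_k}\Phi_j=\partial_{x_k}\Phi_j-c\sum_{m\ne k}\frac{1-s_{km}}{x_k-x_m}\Phi_j$, a short computation shows that for $k\ne j$ every term cancels, so $D_{y_k}\Phi_j=0$ identically, while for $k=j$, using $\partial_z\Phi=c\Phi\sum_i(z-x_i)^{-1}$, everything collapses to
\[
D_{y_j}\Phi_j=\frac{\Phi}{(z-x_j)^2}-\frac{\partial_z\Phi}{z-x_j}=-\partial_z\Phi_j,
\]
a total $z$-derivative. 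Since the residue at infinity of a derivative of a Laurent series is zero, $D_{y_k}g_j=0$ for every $k$, hence $D_{y_k-y_l}g_j=0$: each $g_j$ is a singular vector.

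For the second half, observe that $s_{kl}g_j=g_{s_{kl}(j)}$ and $\sum_{j=1}^n g_j=\frac{1}{c}\Res_\infty[\partial_z\Phi\,dz]=0$, so the $g_j$ span an $S_n$-stable copy of $\h$; being singular they are annihilated by $\Sym(\h)$, and therefore (by the PBW decomposition) the $A$-ideal $I=(g_1,\dots,g_{n-1})$ coincides with the $\HH_{1,c}(\h)$-submodule they generate. By the first half $I\subseteq J_c$, giving a graded surjection $A/I\twoheadrightarrow A/J_c=L$. To upgrade this to equality I would show that $g_1,\dots,g_{n-1}$ form a regular sequence in $A$; granting this, $A/I$ is a complete intersection with Hilbert series $\left(\frac{1-t^r}{1-t}\right)^{n-1}$, matching the known Hilbert series of the irreducible quotient $L$ (for $\gcd(r,n)=1$, where $L$ is finite-dimensional of dimension $r^{n-1}$), so the surjection forces $I=J_c$.

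The main obstacle is precisely this regular-sequence claim, equivalently that the $n-1$ degree-$r$ forms $g_j$ have the origin as their only common zero in $\h$. To attack it I would use that $g_j(v)=Q_v(v_j)$, where $Q_v(t)$ is, up to sign, the monic degree-$r$ polynomial part at $z=\infty$ of $\prod_i(z-v_i)^{r/n}$; a common zero $v$ then forces every coordinate $v_i$ to be a root of the single degree-$r$ polynomial $Q_v$, a rigid self-consistency condition. Ruling out nonzero solutions is exactly where the arithmetic of $r$ and $n$ enters: this finite-dimensionality input singles out $\gcd(r,n)=1$ as the cleanest case, whereas for general $c=r/n$ one must instead locate the first reducibility degree and analyze the (now possibly infinite-dimensional) quotient directly. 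Either way, this complete-intersection/finite-dimensionality step is the genuinely hard part; by contrast the singular-vector verification of the first half is an exact, self-contained generating-function computation.
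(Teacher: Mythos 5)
First, context: the paper itself never proves Proposition \ref{prop:ce} --- it is quoted from \cite[Proposition 3.1]{CE} purely as characteristic-$0$ motivation, so the only things to compare against are the argument in \cite{CE} (whose key complete-intersection step is, per the paper's Remark after Proposition \ref{prop:ci}, a ``residues by parts'' argument resting on \cite[Lemma 3.2]{CE}) and the paper's own characteristic-$p$ analogues. Your first half is correct and complete: I checked that with $\Phi_j(z)=\prod_i(z-x_i)^c/(z-x_j)$ one indeed gets $D_{y_k}\Phi_j=0$ for $k\neq j$ and $D_{y_j}\Phi_j=-\partial_z\Phi_j$, and since $[z^{-1}]$ of a total $z$-derivative of an integer-power Laurent series vanishes, each $g_j$ is singular; your reduction of the $\HH_{1,c}(\h)$-submodule generated by the $g_j$ to the ideal $(g_1,\dots,g_{n-1})$ and the containment in $J_c$ are also fine. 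This is precisely the residue counterpart of the paper's Lemmas \ref{lem:dFdz}--\ref{lem:dFdxi} and Proposition \ref{prop:ann}, where the same total-derivative mechanism appears as $[z^p]\left(-zF_1(z)-z^2\frac{\partial F_1}{\partial z}\right)=0$.

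The genuine gap is the second half. The claim that $g_1,\dots,g_{n-1}$ form a regular sequence --- equivalently, that their only common zero in $\h$ is the origin --- is stated, sketched in two sentences, and then explicitly deferred as ``the genuinely hard part''; but that claim \emph{is} the hard content of the proposition (it is \cite[Theorem 3.2]{CE}), so what you have actually proved is only $I\subseteq J_c$. Worse, your endgame is structurally confined to $\gcd(r,n)=1$: in characteristic $0$, $L$ is finite-dimensional only when the denominator of $c$ is exactly $n$, so for $c=r/n$ with $1<\gcd(r,n)<n$ the quotient $L$ is infinite-dimensional and the $g_j$ \emph{cannot} form a regular sequence. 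Concretely, for $n=4$, $c=1/2$ (so $r=2$) one computes $g_j=\pm\left(x_j^2-\tfrac{1}{4}\sum_i x_i^2\right)$, and $(1,1,-1,-1)$ is a common zero; yet the statement as quoted covers every integer $r$, so in the non-coprime cases your strategy is not merely incomplete but wrong, and you offer no substitute. Finally, note that you cannot patch the regular-sequence step by borrowing the paper's Proposition \ref{prop:ci}: that proof specializes $c$ to $0$ and uses semicontinuity of rank in a free parameter, which has no characteristic-$0$ analogue here because $c=r/n$ is a fixed scalar --- exactly the obstruction the paper's Remark points out. So the missing ingredient is the residues-by-parts argument of \cite{CE} (or a replacement for it), together with a separate treatment of non-coprime $r$.
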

We interpret the characteristic $p$ analogue of Proposition \ref{prop:ce} to mean that if $r = p$ and $p \mid n$, then since $p/n$ is equivalent to $0/0$ and thus an indeterminacy in characteristic $p$, taking $c=p/n$ in characteristic $0$ should correspond to taking $c$ generic in characteristic $p$.  While this substitution is of course invalid, Proposition \ref{prop:ce} may be interpreted as a statement about certain formal power series.  By using a power series version of this construction of generators which makes sense in characteristic $p$, we are able to mimic the arguments of \cite{BEG, CE} to show that they cut out a complete intersection and generate the entire ideal.  We believe that the philosophy of taking characteristic $p$ analogues of characteristic $0$ results for the rational Cherednik algebra should apply more generally and hope to explore this further in future work.

\subsection{Outline of the paper}

The remainder of this paper is organized as follows.  In Section 2, we check that the generators $f_1, \ldots, f_{n - 1}$ are linearly independent singular vectors.  In Section 3, we show that they cut out a complete intersection.  In Section 4, we put these facts together to conclude Theorem \ref{thm:main}.

\subsection{Acknowledgements} 

The authors thank P. Etingof for suggesting the problem and for helpful discussions; the authors thank also the anonymous referee for comments leading to the streamlined proof of Proposition \ref{prop:ci} which appears here.  Some exploratory computations were done using Sage.  S.~D. was supported by the MIT Undergraduate Research Opportunities Program (UROP). Y.~S. was supported by a NSF Graduate Research Fellowship (NSF Grant \#1122374).  Both authors were also supported by NSF Grant DMS-1000113.

\section{An explicit construction of singular vectors}

\subsection{Definition of the singular vectors}

In $A$, define the polynomials 
\[
g(z)=\prod_{j=1}^{n} (1-x_jz) \qquad \text{ and } \qquad F(z) = \sum_{m=0}^{p-1} \binom{c}{m} (g(z)-1)^m.
\]
In these terms, we have $F_i(z) = \frac{F(z)}{1-x_iz}$ and $f_i = [z^p] \frac{F(z)}{1-x_iz}$.  We will show that $f_i$ are singular vectors. 

\subsection{Computation of some partial derivatives}

We begin by computing some partial derivatives of $F$ which will be useful for computing the action of the Dunkl operators.

\begin{lemma}\label{lem:z2g}
We have $[z^0]g(z)=1$ and $[z^1]g(z)=0$, meaning $z^2 \mid g(z)-1$.
\end{lemma}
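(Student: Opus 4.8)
The plan is to expand the product $g(z)=\prod_{j=1}^{n}(1-x_jz)$ directly and read off its lowest-order coefficients as elementary symmetric functions of the $x_j$. First I would observe that the constant term is the empty product of the $1$'s, so that $[z^0]g(z)=1$. For the linear term, the coefficient of $z$ is obtained by choosing the factor $-x_jz$ from exactly one of the $n$ factors and the factor $1$ from the remaining $n-1$, which gives
\[
[z^1]g(z)=-\sum_{j=1}^{n}x_j.
\]

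The key step is then to invoke the defining relation of the representation $\h^*$. Since $\h^*=V^*/(x_1+\cdots+x_n)$, the image of $x_1+\cdots+x_n$ is zero in $\h^*$, and therefore also in the symmetric algebra $A=\Sym(\h^*)$. Consequently the linear coefficient computed above vanishes: $[z^1]g(z)=-\sum_{j}x_j=0$ in $A$. Combining this with $[z^0]g(z)=1$ shows that $g(z)-1$ has neither a constant nor a linear term as a power series in $z$, which is exactly the assertion $z^2\mid g(z)-1$.

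I do not anticipate any genuine obstacle here, as the argument is a direct coefficient extraction. The one point worth emphasizing carefully is that the identity $\sum_j x_j=0$ holds precisely because we have passed to the $(n-1)$-dimensional quotient $\h^*$ rather than working in the full permutation module $V^*$; were the linear coefficient the honest symmetric function $e_1=\sum_j x_j$, the divisibility would fail. This vanishing of the linear term is what makes the lemma the natural starting point for analyzing the series $F(z)=\sum_{m=0}^{p-1}\binom{c}{m}(g(z)-1)^m$, since it guarantees that each power $(g(z)-1)^m$ is divisible by $z^{2m}$ and hence controls the low-degree behavior of $F$.
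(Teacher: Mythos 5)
Your proposal is correct and follows the paper's own argument essentially verbatim: expand $g(z)$ in elementary symmetric functions of the $x_j$, observe the constant term is $1$ and the linear coefficient is $-\sum_j x_j$, and invoke the relation $\sum_j x_j=0$ in $A=\Sym(\h^*)$ coming from the quotient defining $\h^*$. No gaps.
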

\begin{proof}
For elementary symmetric polynomials $e_2, \ldots, e_n$, we have the expansion 
\[
g(z)=\prod_{j=1}^{n} (1-x_jz) = 1-z\sum_i x_i+z^2e_2(x_1, \ldots, x_{n})+\dots+(-1)^nz^ne_n(x_1, \ldots, x_{n}).
\]
Recalling that $\sum_i x_i=0$ in $A$, we see that $[z^1]g(z)=0$ and $[z^0]g(z)=1$, so $z^2 \mid g(z)-1$ as desired.
\end{proof}

\begin{lemma}\label{lem:dFdz}
For some formal power series $V(z)$ with $[z^l]V(z)=0$ for $l=0,\dots,p-1$, we have
\[
F'(z) = V(z) - \sum_{j=1}^{n} \frac{cx_j}{1-x_jz} F(z).
\]
\end{lemma}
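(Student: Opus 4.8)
The plan is to differentiate $F$ directly and measure how far the result deviates from the stated main term. First I would record the logarithmic derivative of $g$: since $g(z)=\prod_j(1-x_jz)$, we have $g'(z)=-g(z)\sum_{j=1}^n \frac{x_j}{1-x_jz}$, equivalently $\frac{g'(z)}{g(z)}=-\sum_{j=1}^n\frac{x_j}{1-x_jz}$, where $1/g(z)$ is read as a formal power series (legitimate since $g(0)=1$). Consequently the claimed main term is $-\sum_{j=1}^n\frac{cx_j}{1-x_jz}F(z)=c\frac{g'(z)}{g(z)}F(z)$, and it suffices to prove that $V(z):=F'(z)-c\frac{g'(z)}{g(z)}F(z)$ has $[z^l]V(z)=0$ for $l=0,\dots,p-1$.

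Next I would differentiate $F(z)=\sum_{m=0}^{p-1}\binom{c}{m}(g(z)-1)^m$ termwise. Using the identity $m\binom{c}{m}=c\binom{c-1}{m-1}$, which is a polynomial identity in $c$ and hence valid over $k$, this gives $F'(z)=cg'(z)\sum_{m=0}^{p-2}\binom{c-1}{m}(g(z)-1)^m$. The cleanest way to compare this with $c\frac{g'}{g}F$ is to pass to the full binomial series $G(z)=\sum_{m\ge 0}\binom{c}{m}(g(z)-1)^m$, the formal expansion of $g(z)^c$. The same two ingredients $m\binom{c}{m}=c\binom{c-1}{m-1}$ and Pascal's rule $\binom{c}{m}=\binom{c-1}{m}+\binom{c-1}{m-1}$ show that $G$ satisfies the exact relation $G'(z)=c\frac{g'(z)}{g(z)}G(z)$ as formal power series over $k$. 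Writing $R(z)=G(z)-F(z)=\sum_{m\ge p}\binom{c}{m}(g(z)-1)^m$ for the tail, we then obtain $V(z)=-\big(R'(z)-c\frac{g'(z)}{g(z)}R(z)\big)$.

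Finally I would conclude by an order-of-vanishing estimate. By Lemma \ref{lem:z2g} we have $z^2\mid g(z)-1$, so $R(z)$ is divisible by $(g(z)-1)^p$ and hence by $z^{2p}$; therefore $R'(z)$ vanishes to order at least $2p-1$ and $\frac{g'(z)}{g(z)}R(z)$ to order at least $2p$ (as $g'$ vanishes to order at least $1$ and $1/g$ is a unit). Thus $V(z)$ vanishes to order at least $2p-1\ge p$, which yields $[z^l]V(z)=0$ for $l=0,\dots,p-1$ and proves the lemma.

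I expect the only delicate point to be the formal manipulation of $G=g^c$ in characteristic $p$: one must check that $G'=c\frac{g'}{g}G$ and the factorization $G/g=\sum_m\binom{c-1}{m}(g-1)^m$ are genuine identities of formal power series, i.e. that each binomial-coefficient relation used is a polynomial identity in $c$ and so survives reduction mod $p$. If one prefers to avoid infinite series altogether, the same bound can be reached by a finite telescoping computation: multiplying $V/(cg')$ by $g$ and applying Pascal's rule collapses the sum to a single boundary term, giving the exact formula $V(z)=-c\binom{c-1}{p-1}\frac{g'(z)(g(z)-1)^{p-1}}{g(z)}$, from which the order-$(2p-1)$ vanishing is immediate.
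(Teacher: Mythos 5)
Your proposal is correct, and all the formal-power-series manipulations it relies on are legitimate: $G(z)=\sum_{m\ge 0}\binom{c}{m}(g(z)-1)^m$ is a well-defined element of $A[[z]]$ because $z^{2}\mid g(z)-1$ forces $z$-adic summability, termwise differentiation is then valid, and the two binomial identities you invoke are polynomial identities in $c$, so they survive reduction mod $p$. The identity $G'=c\frac{g'}{g}G$ and the tail bound ($R=G-F$ divisible by $(g-1)^p$, hence by $z^{2p}$) do give $[z^l]V=0$ for $l\le 2p-2$, which is more than enough. Your route differs from the paper's in packaging rather than substance: the paper never introduces the full series $g^c$, but instead performs the finite telescoping directly on $F'=\sum_{m=1}^{p-1}m\binom{c}{m}(g-1)^{m-1}g'$, splitting $g=(g-1)+1$ and applying Pascal's rule to collapse everything onto the single boundary term, which yields the explicit formula $V(z)=\sum_j\frac{x_j}{1-x_jz}\,c\binom{c-1}{p-1}(g(z)-1)^{p-1}$ (equal to your $-c\binom{c-1}{p-1}g'(g-1)^{p-1}/g$, since $\sum_j\frac{x_j}{1-x_jz}=-g'/g$); the vanishing then follows from $z^{2(p-1)}\mid V$ and $p\ge 2$. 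Your main argument buys a cleaner conceptual picture --- $F$ is a truncation of $g^c$, which satisfies the exact differential equation, and $V$ is the truncation error --- at the cost of handling an infinite series; the paper's computation stays finite and produces $V$ exactly. The ``finite telescoping'' alternative you sketch at the end is precisely the paper's proof.
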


\begin{proof}
We see easily that $\frac{\partial g}{\partial z} = -g(z) \sum_j \frac{x_j}{1-x_jz}$. We now consider $\frac{\partial F}{\partial z}$. We compute
\begin{align*}
\frac{\partial F}{\partial z}
&=\sum_{m=1}^{p-1}m\binom{c}{m}(g(z)-1)^{m-1}\frac{\partial g}{\partial z}\\
&=-\sum_j \frac{x_j}{1-x_jz}\sum_{m=0}^{p-2}c\binom{c-1}{m}(g(z)-1)^{m}(g(z)-1+1)\\
&=-\sum_j \frac{x_j}{1-x_jz}\left(\sum_{m=0}^{p-2}c\binom{c-1}{m}(g(z)-1)^{m}+\sum_{m=1}^{p-1}c\binom{c-1}{m-1}(g(z)-1)^{m}\right)\\
&=-\sum_j \frac{x_j}{1-x_jz}\left(\sum_{m=0}^{p-1}c\binom{c}{m}(g(z)-1)^{m}-c\binom{c-1}{p-1}(g(z)-1)^{p-1}\right)\\
&=-\sum_j \frac{cx_j}{1-x_jz}F(z)+\sum_j \frac{x_j}{1-x_jz}c\binom{c-1}{p-1}(g(z)-1)^{p-1}.
\end{align*}
Defining the formal power series
\[
V(z)=\sum_j \frac{x_j}{1-x_jz}c\binom{c-1}{p-1}(g(z)-1)^{p-1},
\]
we see that $F'(z) = V(z) - \sum_{j=1}^{n} \frac{cx_j}{1-x_jz} F(z)$. It remains only to show that $[z^l]V(z)=0$ for $l=0,\dots,p-1$, which follows by noting that $(g(z)-1)^{p-1} \mid V(z)$, applying Lemma~\ref{lem:z2g}, and noting $p \ge 2$.
\end{proof}

\begin{lemma}\label{lem:dFdxi} 
For some formal power series $G(z)$ with $[z^l]G(z)=0$ for $l=0,\dots,p$, we have
\[
\partial_{y_2-y_1}(F(z))= G(z) - \left(\frac{zc}{1-x_2z}-\frac{zc}{1-x_1z}\right)F(z).
\]
\end{lemma}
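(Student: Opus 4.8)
The plan is to differentiate $F(z) = \sum_{m=0}^{p-1} \binom{c}{m}(g(z)-1)^m$ with respect to the variable $x_2 - x_1$ and to mirror exactly the combinatorial manipulation already carried out for $\partial F/\partial z$ in Lemma~\ref{lem:dFdz}. First I would record the analogue of the computation $\partial g/\partial z$: since $g(z) = \prod_j (1 - x_j z)$, the operator $\partial_{y_2 - y_1}$ acts only on the factors $(1 - x_1 z)$ and $(1 - x_2 z)$ via $\partial_{y_2 - y_1}(x_k) = \langle y_2 - y_1, x_k\rangle = \delta_{k2} - \delta_{k1}$, so that $\partial_{y_2 - y_1}(g(z)) = g(z)\left(\tfrac{z}{1 - x_2 z} - \tfrac{z}{1 - x_1 z}\right)$ after pulling out the product and accounting for the sign. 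The key point is that this prefactor plays the role that $-\sum_j \tfrac{x_j}{1 - x_j z}$ played in the previous lemma, so the rest of the bookkeeping should be structurally identical.

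Next I would substitute this into $\partial_{y_2 - y_1}(F(z)) = \sum_{m=1}^{p-1} m \binom{c}{m}(g(z)-1)^{m-1}\,\partial_{y_2 - y_1}(g(z))$ and apply the two standard binomial identities used before, namely $m\binom{c}{m} = c\binom{c-1}{m-1}$ together with the reindexing trick of writing $(g(z)-1)^{m-1}(g(z)-1+1)$ to telescope $c\binom{c-1}{m}$ and $c\binom{c-1}{m-1}$ into $c\binom{c}{m}$ via Pascal's rule. This reconstitutes $\sum_{m=0}^{p-1} c\binom{c}{m}(g(z)-1)^m = c\,F(z)$ up to the single leftover boundary term $c\binom{c-1}{p-1}(g(z)-1)^{p-1}$, exactly as in Lemma~\ref{lem:dFdz}. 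Carrying the prefactor $\left(\tfrac{z}{1-x_2 z} - \tfrac{z}{1-x_1 z}\right)$ through gives
\[
\partial_{y_2-y_1}(F(z)) = G(z) - \left(\frac{zc}{1-x_2 z} - \frac{zc}{1-x_1 z}\right) F(z),
\]
where I define $G(z) = \left(\tfrac{z}{1-x_2 z} - \tfrac{z}{1-x_1 z}\right) c\binom{c-1}{p-1}(g(z)-1)^{p-1}$ as the error term.

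Finally I would verify the claimed vanishing $[z^l]G(z) = 0$ for $l = 0, \ldots, p$. By Lemma~\ref{lem:z2g} we have $z^2 \mid g(z) - 1$, so $(g(z)-1)^{p-1}$ is divisible by $z^{2(p-1)}$; and the prefactor $\tfrac{z}{1-x_2 z} - \tfrac{z}{1-x_1 z}$ carries an explicit factor of $z$, so $G(z)$ is divisible by $z^{2(p-1)+1} = z^{2p-1}$. Since $2p - 1 \geq p + 1$ for $p \geq 2$, all coefficients $[z^l]G(z)$ vanish through $l = p$, which is even stronger than needed. The main thing to be careful about is the sign and the correct form of the prefactor coming from $\partial_{y_2 - y_1}$ acting on $g(z)$ — unlike the $z$-derivative, only two factors are differentiated and they contribute with opposite signs — but once that is pinned down the argument is a direct transcription of Lemma~\ref{lem:dFdz}, so I expect no genuine obstacle beyond this routine verification.
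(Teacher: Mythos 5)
Your overall strategy is exactly the paper's: compute $\partial_{y_2-y_1}(g(z))$ as a logarithmic-derivative-type prefactor times $g(z)$, push it through the sum defining $F(z)$ using $m\binom{c}{m}=c\binom{c-1}{m-1}$ and the $(g-1)^{m-1}\bigl((g-1)+1\bigr)$ reindexing to reconstitute $cF(z)$ plus the boundary term $c\binom{c-1}{p-1}(g(z)-1)^{p-1}$, and then kill the low-order coefficients of the error term via Lemma~\ref{lem:z2g}. Your definition of $G(z)$ and your divisibility argument ($z^{2p-1}\mid G(z)$, so $[z^l]G(z)=0$ for $l\le p$ since $2p-1\ge p+1$) are both correct and match the paper.

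The one substantive problem is the sign of $\partial_{y_2-y_1}(g(z))$, which you correctly flag as the delicate point but then get backwards. From $\partial_{y_2-y_1}(x_k)=\delta_{k2}-\delta_{k1}$ one gets $\partial_{y_2-y_1}(1-x_2z)=-z$ and $\partial_{y_2-y_1}(1-x_1z)=+z$ (the minus sign in each factor $1-x_jz$ flips the contribution), so the correct formula is $\partial_{y_2-y_1}(g(z))=g(z)\left(-\frac{z}{1-x_2z}+\frac{z}{1-x_1z}\right)$, the negative of what you wrote. If you carry your stated prefactor through the rest of your (otherwise correct) manipulation, you arrive at $\partial_{y_2-y_1}(F(z))=+\left(\frac{zc}{1-x_2z}-\frac{zc}{1-x_1z}\right)F(z)-G(z)$, i.e.\ the negative of the lemma; the conclusion you then write down does not follow from your own intermediate step. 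With the sign corrected, your argument becomes a faithful transcription of the paper's proof and is complete.
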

\begin{proof}
We may compute $\partial_{y_2-y_1}(g(z))=g(z)\left(-\frac{z}{1-x_2z}+\frac{z}{1-x_1z}\right)$. Using this, we see that
\begin{align*}
\partial_{y_2-y_1}(F(z))&=\left(\sum_{m=1}^{p-1}m\binom{c}{m}(g(z)-1)^{m-1}\right)\partial_{y_2-y_1}(g(z))\\
&=\left(-\frac{z}{1-x_2z}+\frac{z}{1-x_1z}\right)\left(\sum_{m=1}^{p-1}m\binom{c}{m}(g(z)-1)^{m-1}\right)g(z)\\
&=\left(-\frac{z}{1-x_2z}+\frac{z}{1-x_1z}\right)\left(\sum_{m=0}^{p-2}c\binom{c-1}{m}(g(z)-1)^{m}+\sum_{m=0}^{p-2}c\binom{c-1}{m}(g(z)-1)^{m+1}\right)\\
&=\left(-\frac{zc}{1-x_2z}+\frac{zc}{1-x_1z}\right)\left(F(z)-\binom{c-1}{p-1}(g(z)-1)^{p-1}\right).
\end{align*}
Defining $G(z)=\left(\frac{zc}{1-x_2z}- \frac{zc}{1-x_1z}\right)\binom{c-1}{p-1}(g(z)-1)^{p-1}$, we have shown that
\[
\partial_{y_2-y_1}(F(z))= G(z) - \left(\frac{zc}{1-x_2z}-\frac{zc}{1-x_1z}\right)F(z)
\]
It remains only to show that $[z^l]G(z)=0$ for $l=0,\dots,p$, which follows by noting that $z(g(z)-1)^{p-1} \mid G(z)$, applying Lemma~\ref{lem:z2g}, and noting $p \geq 2$.
\end{proof}

\subsection{Proving $f_1, \ldots, f_{n-1}$ are singular vectors} 

\begin{proposition}\label{prop:ann} 
The elements $f_1, \ldots, f_{n - 1}$ are singular vectors in $A$.
\end{proposition}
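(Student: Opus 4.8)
The plan is to show directly that each $f_l$ is a homogeneous vector of positive degree (indeed degree $p$, since $g(z)$ is symmetric and balanced for the grading $\deg x_j = 1$, $\deg z = -1$) lying in the kernel of every Dunkl operator $D_y$, $y \in \h$. Because $\h$ is spanned by the vectors $y_i - y_j$, the Dunkl operators satisfy the equivariance $\sigma D_{y_2 - y_1}\sigma^{-1} = D_{\sigma(y_2 - y_1)}$ for $\sigma \in S_n$, and $\sigma F_l(z) = F_{\sigma(l)}(z)$, it suffices to prove that $D_{y_2 - y_1}$ annihilates $f_l = [z^p]F_l(z)$ for every $l \in \{1, \ldots, n\}$. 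Since $D_{y_2 - y_1}$ acts coefficientwise on power series in $z$, I would establish this by computing $D_{y_2 - y_1}F_l(z)$ as a power series and checking that its $z^p$-coefficient vanishes.

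The computational engine is a single identity for the divided-difference terms. Since $g(z)$, and hence $F(z)$, is symmetric in $x_1, \ldots, x_n$, a transposition $s_{ab}$ sends $F_a(z)$ to $F_b(z)$; combining this with $\frac{1}{1 - x_az} - \frac{1}{1 - x_bz} = \frac{(x_a - x_b)z}{(1 - x_az)(1 - x_bz)}$ gives
\[
\frac{1 - s_{ab}}{x_a - x_b}F_a(z) = \frac{zF(z)}{(1 - x_az)(1 - x_bz)},
\]
while $\frac{1 - s_{ab}}{x_a - x_b}$ annihilates $F_l(z)$ for $l \notin \{a, b\}$. Feeding this into the two divided-difference sums of $D_{y_2 - y_1}$, together with Lemma \ref{lem:dFdxi} for $\partial_{y_2 - y_1}F(z)$ and the elementary $\partial_{y_2 - y_1}\frac{1}{1 - x_lz} = \frac{z(\delta_{l2} - \delta_{l1})}{(1 - x_lz)^2}$, reduces each $D_{y_2 - y_1}F_l(z)$ to an explicit rational expression.

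For $l \notin \{1, 2\}$ the derivative of $\frac{1}{1 - x_lz}$ vanishes, and the two surviving divided-difference contributions exactly cancel the two $c$-terms produced by Lemma \ref{lem:dFdxi}, leaving $D_{y_2 - y_1}F_l(z) = \frac{G(z)}{1 - x_lz}$. As $G$ vanishes through order $z^p$ by Lemma \ref{lem:dFdxi}, its $z^p$-coefficient is $0$ and $f_l$ is singular. The case $l = 1$ then follows from the case $l = 2$ by conjugating with $s_{12}$, which negates $D_{y_2 - y_1}$ and swaps $f_1$ and $f_2$; so the crux is the single case $l = 2$.

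The hard part will be $l = 2$, where the derivative term $\frac{zF}{(1 - x_2z)^2}$ survives and the divided-difference sum assembles $\sum_m \frac{1}{1 - x_mz}$. After the cross terms cancel I expect to reach
\[
D_{y_2 - y_1}F_2(z) = \frac{G(z)}{1 - x_2z} + \frac{zF(z)}{(1 - x_2z)^2} - \frac{czF(z)}{1 - x_2z}P(z), \qquad P(z) := \sum_{m=1}^n \frac{1}{1 - x_mz}.
\]
Since $p \mid n$ and $\sum_m x_m = 0$, the series $P(z)$ begins in degree $2$, and the first term contributes nothing at $z^p$. To control the remaining two terms I would invoke Lemma \ref{lem:dFdz} in the form $cP(z)F(z) = z(V(z) - F'(z))$, using $\sum_j \frac{cx_j}{1 - x_jz} = \frac{c}{z}P(z)$; the $V$-contribution then dies past $z^p$, and the problem collapses to the $z^p$-coefficients of $\frac{zF}{(1 - x_2z)^2}$ and $\frac{z^2F'}{1 - x_2z}$. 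Writing $F(z) = \sum_j F^{(j)}z^j$, these coefficients are $\sum_{j=0}^{p-1}(p - j)F^{(j)}x_2^{p-1-j}$ and $\sum_{j=1}^{p-1}jF^{(j)}x_2^{p-1-j}$, whose sum vanishes because $p - j \equiv -j \pmod p$. This reduction modulo $p$ is precisely where the hypothesis on the characteristic enters, and it is the step I would treat most carefully.
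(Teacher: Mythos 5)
Your proposal is correct and follows essentially the same route as the paper: the same Lemmas \ref{lem:dFdz} and \ref{lem:dFdxi}, the same divided-difference identity, and the same final mod-$p$ cancellation $(p-j)+j\equiv 0$ (the paper's $-b+b=0$ between $-zF_1$ and $-z^2\partial_z F_1$), with $p\mid n$ entering at the same spot. The only cosmetic difference is that you reduce by $S_n$-equivariance to checking $D_{y_2-y_1}$ on every $F_l$ (three subcases), whereas the paper uses the symmetry of $f_1$ in $x_2,\dots,x_n$ to reduce to the single computation $D_{y_2-y_1}F_1$.
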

\begin{proof}
We must show that for $i = 1, \ldots, n - 1$, $f_i$ is annihilated by $D_{y_j - y_l}$ for all $j \ne l$.  First, by symmetry it suffices to consider $f_1$.  Because the Dunkl operators $D_{y_i-y_j}$ for all $i \ne j$ are spanned by $D_{y_i-y_1}$ for $1< i \le n$, it suffices to show $D_{y_i - y_1} f_1 = 0$.  Finally, because $f_1$ is symmetric in the $x_i$ for $i > 1$, it suffices to show that $D_{y_2 - y_1} f_1 = 0$.

Recall by Lemma~\ref{lem:dFdxi} that $\partial_{y_2-y_1}(F(z))= G(z) - \left(\frac{zc}{1-x_2z}-\frac{zc}{1-x_1z}\right)F(z)$ for a power series $G(z)$ with $[z^l]G(z)=0$ for $l=0,\dots,p$. In terms of $G(z)$, we can calculate $\partial_{y_2-y_1}(F_1(z))$ as
\begin{align*}
\partial_{y_2-y_1}(F_1(z))
&=-\frac{z}{(1-x_1z)^2}F(z)+\frac{1}{1-x_1z}\partial_{y_2-y_1}(F(z))\\
&=-\frac{z}{1-x_1z}F_1(z)+\frac{1}{1-x_1z}\left(\frac{zc}{1-x_1z}-\frac{zc}{1-x_2z}\right)F(z)+\frac{G(z)}{1-x_1z}\\
&=\left(\frac{z(c-1)}{1-x_1z}-\frac{zc}{1-x_2z}\right)F_1(z)+\frac{G(z)}{1-x_1z}.
\end{align*}
In addition, we have that 
\[
\frac{1-s_{1i}}{x_1-x_i}(F_1(z))
=\frac{1}{x_1-x_i}\left(\frac{1}{1-x_1z}-\frac{1}{1-x_iz}\right)F(z)=\frac{z}{(1-x_iz)(1-x_1z)}F(z)=\frac{z}{1-x_iz}F_1(z).
\]
Finally, by Lemma~\ref{lem:dFdz}, we have
\[
\frac{\partial F}{\partial z}=V(z) - \sum_j \frac{cx_j}{1-x_jz}F(z),
\]
where $[z^l]V(z)=0$ for $l=0,\dots,p-1$. From this, it follows that
\begin{align*}
\frac{\partial F_1}{\partial z} = \frac{\partial}{\partial z}\left(\frac{F(z)}{1-x_1z}\right) &=\frac{1}{1-x_1z} \frac{\partial F}{\partial z}+\frac{x_1}{(1-x_1z)^2} F(z)\\
&=\frac{V(z)}{1-x_1z}-\frac{1}{1-x_1z}\sum_j \frac{cx_j}{1-x_jz}F(z)+\frac{x_1}{(1-x_1z)^2} F(z)\\
&=\frac{V(z)}{1-x_1z} - \sum_j \frac{cx_j}{1-x_jz}F_1(z)+\frac{x_1}{1-x_1z} F_1(z).
\end{align*}
Because $F_1(z)$ is invariant under $s_{ij}$ for $i,j > 1$, we now compute
\begin{align*}
D_{y_2-y_1}(F_1(z))&=\left(\partial_{y_2-y_1}-c\frac{1-s_{12}}{x_2-x_1}+c\sum_{j > 1} \frac{1-s_{1j}}{x_1-x_j}\right)F_1(z)\\
&=\partial_{y_2-y_1}(F_1(z))-c\frac{1-s_{12}}{x_2-x_1}F_1(z)+c\sum_{j > 1} \frac{1-s_{1j}}{x_1-x_j}F_1(z)\\
&=\frac{G(z)}{1-x_1z}+\left(\frac{z(c-1)}{1-x_1z}-\frac{zc}{1-x_2z}\right)F_1(z)+\frac{zc}{1-x_2z}F_1(z)+\sum_{j > 1} \frac{zc}{1-x_jz}F_1(z)\\
&=\frac{G(z)}{1-x_1z}-\frac{z}{1-x_1z}F_1(z)+\sum_{j} \frac{zc}{1-x_jz}F_1(z)\\
&=\frac{G(z)}{1-x_1z}-zF_1(z)+zF_1(z)-\frac{z}{1-x_1z}F_1(z) + \sum_{j} \left(\frac{zc}{1-x_jz} - zc\right) F_1(z)\\
&=\frac{G(z)}{1-x_1z}-zF_1(z)-\frac{x_1z^2}{1-x_1z}F_1(z)+\sum_{j} \frac{x_jcz^2}{1-x_jz}F_1(z)\\
&=\frac{G(z)}{1-x_1z}-zF_1(z)-z^2\frac{\partial F_1}{\partial z}+z^2\frac{V(z)}{1-x_1z}\\
&=\frac{G(z)+z^2V(z)}{1-x_1z}-zF_1(z)-z^2\frac{\partial F_1}{\partial z},
\end{align*}
where in the fifth step we have subtracted $n zc F_1(z)$.  We note that $[z^p]\,\frac{G(z)+z^2V(z)}{1-x_1z}$ is a linear combination of $[z^l](G(z)+z^2V(z))$ for $0 \le l \le p$, hence a linear combination of $[z^l]G(z)$ for $0 \le l \le p$ and $[z^l]V(z)$ for $0 \le l \le p-2$. By Lemmas~\ref{lem:dFdz} and \ref{lem:dFdxi}, these coefficients of $G(z)$ and $V(z)$ are all $0$, hence $[z^p]\,\frac{G(z)+z^2V(z)}{1-x_1z}=0$. We conclude that 
\[
[z^p]D_{y_2-y_1}(F_1(z))=[z^p]\left(-zF_1(z)-z^2\frac{\partial F_1}{\partial z}\right).
\]
If $b=[z^{p-1}](F_1(z))$, then $[z^p](-zF_1(z))=-b$ and $[z^p]\left(-z^2\frac{\partial F_1}{\partial z}\right)=b$, which implies that 
\[
D_{y_2 - y_1} f_1 = [z^p]D_{y_2-y_1}(F_1(z))= [z^p]\left(-zF_1(z)-z^2\frac{\partial F_1}{\partial z}\right) = -b + b = 0. \qedhere
\]
\end{proof}

\subsection{Proof of linear independence of $f_1, \ldots, f_{n-1}$}

\begin{proposition}\label{prop:linind} 
For generic $c$, $f_1, \ldots, f_{n - 1}$ are linearly independent degree $p$ homogeneous polynomials.
\end{proposition}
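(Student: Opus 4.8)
The plan is to reduce the linear independence of $f_1, \dots, f_{n-1}$ to the nonvanishing of an explicit polynomial in $c$ by constructing a family of evaluation homomorphisms that simultaneously ``diagonalize'' the $f_i$. First I would record that each $f_i$ is homogeneous of degree $p$: since $[z^l]g(z)$ is homogeneous of degree $l$ in the $x_j$, so is $[z^l]F(z)$, and expanding $\frac{1}{1-x_iz}=\sum_{k\ge 0}x_i^kz^k$ gives
\[
f_i=\sum_{k=0}^{p}\big([z^{p-k}]F(z)\big)\,x_i^{k},
\]
each summand of degree $p$; in particular $[z^0]F(z)=1$, so the top term is $x_i^p$.

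Next, for each $i\in\{1,\dots,n-1\}$ I would introduce the $k$-algebra homomorphism $\phi_i\colon A\to k[x_i]$ determined by $x_l\mapsto 0$ for $l\neq i,n$ and $x_n\mapsto -x_i$; this respects the defining relation $x_1+\cdots+x_n=0$ of $A$ and is homogeneous. The point of this choice is the collapse
\[
\phi_i(g(z))=(1-x_iz)(1+x_iz)=1-x_i^2z^2,
\]
so that $\phi_i(F(z))=\sum_{m=0}^{p-1}(-1)^m\binom{c}{m}x_i^{2m}z^{2m}$ is a power series in $z^2$. For $j\neq i$ we have $\phi_i(x_j)=0$, hence $\phi_i(F_j(z))=\phi_i(F(z))$ contains only even powers of $z$; taking $p$ odd this gives $\phi_i(f_j)=[z^p]\phi_i(F_j(z))=0$. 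On the other hand,
\[
\phi_i(f_i)=[z^p]\frac{\phi_i(F(z))}{1-x_iz}=\Big(\sum_{m=0}^{(p-1)/2}(-1)^m\binom{c}{m}\Big)x_i^p=:C\,x_i^p.
\]

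With these computations in hand the argument is immediate. The matrix $\big(\phi_i(f_j)\big)_{i,j}$ is $C\,x_i^p\,\delta_{ij}$, and $x_i^p\neq 0$ in $A$ because the only linear relation among the $x_l^p$ is $\sum_{l=1}^n x_l^p=(\sum_l x_l)^p=0$, so that $x_1^p,\dots,x_{n-1}^p$ are linearly independent. Thus applying $\phi_i$ to a relation $\sum_j a_jf_j=0$ yields $a_iC\,x_i^p=0$, forcing $a_i=0$ once $C\neq 0$. It therefore remains to see that $C=\sum_{m=0}^{(p-1)/2}(-1)^m\binom{c}{m}$ is not identically zero as a polynomial in $c$: its top-degree term is $(-1)^{(p-1)/2}c^{(p-1)/2}/\big((p-1)/2\big)!$, and since $(p-1)/2<p$ the factorial is invertible in $k$, so $C$ has degree $(p-1)/2\ge 1$ and vanishes for only finitely many $c$. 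For such generic $c$ the $f_i$ are linearly independent.

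The one place requiring extra care, and the main obstacle, is the parity of $p$. For even $p$ the coefficient $[z^p]\phi_i(F(z))$ no longer vanishes, so $\phi_i(f_j)$ equals $D_p\,x_i^p$ for $j\neq i$ and $C_p\,x_i^p$ for $j=i$, where $C_p=\sum_{m=0}^{p/2}(-1)^m\binom{c}{m}$ and $D_p=(-1)^{p/2}\binom{c}{p/2}$. The evaluation matrix is then $(C_p-D_p)I+D_pJ$ with $J$ the all-ones matrix, whose determinant is $(C_p-D_p)^{n-2}\big(C_p+(n-2)D_p\big)$. Here I would use $p\mid n$ to replace $n-2$ by its residue $-2\bmod p$, and then check, again by a leading-coefficient computation with factorials below $p$, that each factor is a nonzero polynomial in $c$. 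Hence the determinant is a nonzero polynomial and the matrix is invertible for generic $c$, giving linear independence uniformly in $p$.
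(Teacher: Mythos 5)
Your proof is correct and follows essentially the same route as the paper's: your specialization $\phi_i$ (sending $x_l \mapsto 0$ for $l \neq i,n$ and $x_n \mapsto -x_i$) is a one-variable version of the paper's substitution $x_j = 1$, $x_n = -1$, remaining variables zero, which likewise collapses $g(z)$ to a series in $z^2$ and diagonalizes (for odd $p$) or nearly diagonalizes (for $p=2$) the evaluations of the $f_j$. The paper identifies your constant $C$ as $\binom{c-1}{(p-1)/2}$, excluding $c \in \{1,\dots,(p-1)/2\}$, and treats $p=2$ by the same $(C-D)I + DJ$ linear system, arriving at the condition $c \neq -1$.
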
 
\begin{proof} 
We have the expansion
\[
F_i(z)=\frac{1}{1-x_iz}\sum_{m=0}^{p-1} \binom{c}{m} (g(z)-1)^m=\sum_{l=0}^\infty x_i^lz^l\sum_{m=0}^{p-1} \binom{c}{m} (g(z)-1)^m.
\]
Because for any $l$ the coefficient of $z^l$ in each factor is a homogeneous polynomial of degree $l$, we see that $[z^p]F_i(z)$ is homogeneous of degree $p$.

For linear independence, suppose that $\sum_{i=1}^{n-1} \lambda_if_i=0$ for some $\lambda_i \in k$.  Substitute $x_{n} = -1$, $x_j=1$ and $x_i=0$ for $i \ne j,i < n$ so that $g(z)=(1-z)(1+z)=1-z^2$ and hence
\[
F_j(z)=\sum_{l=0}^\infty z^l\sum_{m=0}^{p-1} \binom{c}{m} (-z^2)^m
\]
and 
\[
F_i(z)=\sum_{l=0}^\infty 0^lz^l\sum_{m=0}^{p-1} \binom{c}{m} (-z^2)^m=\sum_{m=0}^{p-1} \binom{c}{m} (-z^2)^m \text{ for $i < n - 1$, $i \neq j$}.
\]
If $p = 2$, we see that $[z^2]F_j(z)=1-c$ and $[z^2]F_i(z)=-c$, so varying $j$ implies that
\[
\lambda_j = c\sum_{i=1}^{n-1} \lambda_i \text{ for all }j.
\]
In particular, all $\lambda_i$ have common value $\lambda \in k$ solving $(1-c(n-1))\lambda = 0$, which for $c \ne -1$ and hence for $c$ generic implies that $\lambda = 0$, giving linear independence. 

If $p > 2$, we have 
\[
[z^p]F_j(z)=f_j=\sum_{m=0}^{(p-1)/2} (-1)^m\binom{c}{m}=\binom{c-1}{(p-1)/2}
\]
and $[z^p]F_i(z)=f_i=0$ for $i < n$ and $i \neq j$. For $c \notin \{1,2,\dots,(p-1)/2\}$ and hence for generic $c$, we have $\binom{c-1}{(p-1)/2} \ne 0$, meaning that 
\[
\sum_{i=1}^{n-1} \lambda_if_i = \lambda_j\binom{c-1}{(p-1)/2}=0,
\]
which implies $\lambda_j=0$.  Varying $j$ implies that $\lambda_j=0$ for all $j$, again yielding linear independence.
\end{proof}

\section{Complete intersection properties}

Consider the homogeneous ideal $I_c = \langle f_1, \ldots, f_{n - 1} \rangle \subset A$ generated by the $f_i$.  In this section, we will show that $A/I_c$ is a complete intersection.  Recall from \cite[Example 11.8]{Har} that for a homogeneous ideal $I \subset k[X_0,\dots,X_m]$ with a minimal set of generators of size $l$, the quotient ring $k[X_0, \ldots, X_m]/I$ is a complete intersection if the closed projective subvariety of $\PP^m$ defined by $I$ has dimension $m - l$.

\begin{proposition}\label{prop:ci}
For generic $c$, the quotient $A/I_c$ is a complete intersection. 
\end{proposition}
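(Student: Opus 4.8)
The plan is to use exactly the dimension criterion recalled just above. Since $\h^*$ is $(n-1)$-dimensional, $A = \Sym(\h^*)$ is a polynomial ring in $n-1$ variables and $\operatorname{Proj} A \cong \PP^{n-2}$. By Proposition \ref{prop:linind}, for generic $c$ the elements $f_1, \ldots, f_{n-1}$ are linearly independent forms of the common degree $p$; as $I_c$ is generated in degree $p$ we have $(I_c)_{p-1} = 0$, so linear independence makes them a minimal set of generators of size $l = n-1$. Thus it suffices to show that for generic $c$ the projective variety $Z_c \subseteq \PP^{n-2}$ defined by $f_1, \ldots, f_{n-1}$ has dimension $(n-2) - (n-1) = -1$, i.e.\ is empty; equivalently, that the affine cone $V(f_1, \ldots, f_{n-1}) \subseteq \h$ equals $\{0\}$.

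To handle all parameters simultaneously I would introduce the incidence locus
\[
X = \{(c, [v]) \in \mathbb{A}^1 \times \PP^{n-2} : f_1(v) = \cdots = f_{n-1}(v) = 0\},
\]
which is closed because the $f_i$ are homogeneous in $v$ with coefficients polynomial in $c$. By the main theorem of elimination theory (equivalently, properness of $\PP^{n-2}$ over $\operatorname{Spec} k$), the image of the projection $\pi \colon X \to \mathbb{A}^1$ is Zariski-closed. Hence, as soon as one value $c_0$ is found with $Z_{c_0} = \emptyset$, the set $\pi(X)$ is a proper closed subset of the affine line, therefore finite; and $Z_c$ is then empty for every $c$ outside this finite set, which is precisely the genericity asserted in the statement.

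It remains to exhibit one good specialization, and the natural choice is $c_0 = 0$. There $\binom{0}{m} = 0$ for every $m \ge 1$, so $F(z)$ reduces to the constant $1$ and $f_i|_{c=0} = [z^p]\frac{1}{1 - x_i z} = x_i^p$. Since $x_1^p = \cdots = x_{n-1}^p = 0$ forces $x_1 = \cdots = x_{n-1} = 0$ and hence $x_n = -(x_1 + \cdots + x_{n-1}) = 0$, the locus $V(x_1^p, \ldots, x_{n-1}^p) \subseteq \h$ is $\{0\}$, so $Z_0 = \emptyset$. Feeding this into the previous paragraph yields the complete intersection property for all $c$ off a finite set.

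I expect the main point to be the reduction and the spreading-out step rather than any hard computation. One must ensure that $f_1, \ldots, f_{n-1}$ is a minimal generating set of size exactly $n-1$ — this is where Proposition \ref{prop:linind} is essential, since otherwise the target dimension in the criterion would change — and one must invoke closedness of the elimination image correctly, so that emptiness at the single fibre $c=0$ propagates to a cofinite set of $c$. As a cross-check and an alternative route, the expansion $f_i = \sum_{k=0}^{p} h_k x_i^{p-k}$ with $h_k = [z^k]F(z)$ and, by Lemma \ref{lem:z2g}, $h_0 = 1$ and $h_1 = 0$, shows that a common zero $a$ is a point all of whose coordinates are roots of the single degree-$p$ polynomial $t^p + h_2(a)t^{p-2} + \cdots + h_p(a)$; one could instead argue emptiness directly from this, but the degeneration to $c = 0$ is cleaner and self-contained.
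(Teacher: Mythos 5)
Your proof is correct and follows the same overall strategy as the paper: reduce via Proposition \ref{prop:linind} and the dimension criterion to showing the projective zero locus of $f_1,\dots,f_{n-1}$ is empty, then specialize to $c=0$, where $F(z)\equiv 1$ and $f_i=x_i^p$, so the locus is visibly empty, and finally propagate emptiness to all but finitely many $c$. The only point of divergence is the propagation step. The paper phrases emptiness as $\dim_k(A/I_c)<\infty$ and argues by elementary linear algebra: in a fixed degree $d$ with $I_0^d=A^d$, the dimension of $I_c^d$ is controlled by the maximal minors of a matrix whose entries are polynomials in $c$, so $I_c^d=A^d$ off the finite vanishing locus of a nonzero determinantal condition. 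You instead form the incidence variety $X\subset \mathbb{A}^1\times\PP^{n-2}$ and invoke the main theorem of elimination theory (properness of $\PP^{n-2}$) to conclude that the set of bad $c$ is closed, hence finite once $c=0$ is excluded. Both arguments are valid and yield the same non-explicit finite exceptional set; the paper's version is more self-contained and makes the exceptional set concretely determinantal, while yours is shorter if one is willing to quote properness. Your remaining steps (minimality of the generating set via linear independence in the single degree $p$, and the identification of the affine cone's zero locus with $\{0\}$ using $x_n=-(x_1+\cdots+x_{n-1})$) match the paper's reasoning.
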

\begin{proof}
By Proposition~\ref{prop:linind}, $I_c$ has a set of $n-1$ linearly independent and therefore minimal generators $f_1, \ldots, f_{n-1}$ in degree $p$.  Therefore $A/I_c$ is a complete intersection if and only if $I_c$ cuts out a projective variety of dimension $(n - 2) - (n - 1) = -1$, meaning it is empty.  This occurs if and only if the saturation of $I_c$ is $A$, which occurs if and only if $\dim_k(A/I_c) < \infty$.

If $c = 0$, by definition of $f_i$ we see that $f_i = x_i^p$.  Note that 
\[
f_n = x_n^p = (-x_1 - \cdots - x_{n-1})^p = - x_1^p - \cdots - x_{n - 1}^p = - f_1 - \cdots - f_{n -1} \in I_0,
\]
meaning that $(x_1^p, \ldots, x_n^p) \subset I_0$ and hence $\dim_k(A/I_0) < \infty$.  Now, let $A^d$ and $I_c^d$ denote the degree $d$ pieces of $A$ and $I_c$, respectively.  Choose a monomial basis $\{t_i\}$ for $A^d$ independent of $c$, and let $I_c^d$ be spanned by a finite set of polynomials $\{g_j\}$ with $g_j = \sum_i h_{ji}(c) t_i$.  Notice that $\dim I_c^d$ is given by the size of the maximal non-vanishing minor of the matrix $H = (h_{ji})$.  On the other hand, we just showed that there is some degree $d > 0$ so that $I_0^d = A^d$, meaning that $\dim I_c^d < \dim A^d$ exactly when $c$ lies in the zero set of all size $\dim A^d$ minors of $H$, viewed as polynomials in $c$.  This implies that for all but finitely many $c$ we have $\dim I_c^d = \dim A^d$, hence $\dim_k(A/I_c) < \infty$ and $A/I_c$ is a complete intersection.
\end{proof}

\begin{remark}
Proposition \ref{prop:ci} is a formal power series analogue of \cite[Theorem 3.2]{CE}.  However, our proof differs from the ``residues by parts'' argument which appears there, as the crucial \cite[Lemma 3.2]{CE} fails in the modular case.  It is interesting to note that our proof does not appear to translate to the characteristic $0$ case, as there is no analogue for the operation of specializing $c$ to $0$.
\end{remark}

\section{Proof of the main result}

We now put everything together to obtain our main result.

\begin{theorem}\label{thm:main}
For generic $c$, $f_1, \ldots, f_{n-1}$ are linearly independent and generate the maximal proper graded submodule $J_c$ of the polynomial representation for $\HH_{1, c}(\h)$.  The irreducible quotient $L = A/J_c$ is a complete intersection with Hilbert series 
\[
h_L(t) = \left(\frac{1-t^p}{1-t}\right)^{n-1}.
\]
\end{theorem}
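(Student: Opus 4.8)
The plan is to combine the three facts already established into the final statement, with the main work being to upgrade ``$I_c$ is a complete intersection cut out by singular vectors'' into ``$I_c = J_c$ exactly.'' First I would record what we already have: by Proposition~\ref{prop:ann} and Proposition~\ref{prop:linind}, the $f_1,\dots,f_{n-1}$ are $n-1$ linearly independent singular vectors of degree $p$, so the ideal $I_c = \langle f_1,\dots,f_{n-1}\rangle$ they generate lies inside $J_c$ (since $J_c$ contains the ideal generated by all singular vectors). By Proposition~\ref{prop:ci}, $A/I_c$ is a complete intersection, which since the $n-1$ generators all have degree $p$ forces the Hilbert series of $A/I_c$ to be exactly $\left(\frac{1-t^p}{1-t}\right)^{n-1}$: a regular sequence of $n-1$ homogeneous elements of degree $p$ in the polynomial ring $A = \Sym(\h^*)$ on $n-1$ variables contributes the Koszul factor $\frac{1-t^p}{1-t}$ for each generator.

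Next I would establish the reverse inclusion $J_c \subseteq I_c$, which is the crux. The key observation is a dimension count: since $A/I_c$ is a finite-dimensional complete intersection and $I_c \subseteq J_c$, we have a surjection $A/I_c \twoheadrightarrow A/J_c = L$, so $\dim_k L \le \dim_k(A/I_c) = p^{\,n-1}$ (the total dimension read off from the Hilbert series by setting $t=1$). To show equality, hence $I_c = J_c$, I would argue that $L$ cannot be strictly smaller. The cleanest route is again a genericity/specialization argument paralleling Proposition~\ref{prop:ci}: the Hilbert series of $L = A/J_c$ is itself determined by the ranks of the contravariant form $\beta_c$ in each degree, and these ranks are ``upper semicontinuous'' in $c$ in the sense that $\dim L_j$ can only drop on a proper closed subset of $c$-values. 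Equivalently, one shows that for generic $c$ the graded pieces $\dim J_c^{\,d}$ attain their maximal possible values simultaneously with $\dim I_c^{\,d}$, so that avoiding the finitely many bad $c$ from all three propositions forces $\dim L_j = \dim(A/I_c)_j$ in every degree, giving $I_c = J_c$.

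The main obstacle I expect is precisely controlling the gap between $I_c$ and $J_c$: a priori $J_c$ could contain further singular vectors or ideal elements in higher degree not generated by the $f_i$, and ruling this out requires knowing $\dim_k L$ exactly rather than just bounding it above. The argument that $\dim_k L \ge p^{\,n-1}$ is the delicate point; one natural way is to show that the surjection $A/I_c \twoheadrightarrow L$ is injective by checking that the degree-by-degree ranks of $\beta_c$ match the Hilbert function of the complete intersection $A/I_c$. Since $A/I_c$ is Gorenstein (a graded complete intersection), its Hilbert function is symmetric with top degree $(n-1)(p-1)$, and the contravariant form pairs $L_j$ with $L_{(n-1)(p-1)-j}$ nondegenerately; matching these pairings with the Koszul/Gorenstein duality on $A/I_c$ should pin down the dimensions. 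Once $I_c = J_c$ is established, the statement that $L$ is a complete intersection and the Hilbert series formula follow immediately from Proposition~\ref{prop:ci} and the Koszul computation above.
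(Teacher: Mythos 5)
Your upper bound is fine and matches the paper: Propositions~\ref{prop:ann} and \ref{prop:linind} give $I_c \subseteq J_c$, Proposition~\ref{prop:ci} gives the Koszul Hilbert series $\left(\frac{1-t^p}{1-t}\right)^{n-1}$ for $A/I_c$, hence $h_L(t) \le h_{A/I_c}(t)$ coefficient-wise. The gap is in the reverse direction, which you correctly identify as the crux but do not actually close. Your semicontinuity argument as stated does not produce a lower bound on $\dim L_j$: since $\beta_c$ depends polynomially on $c$, its rank in each degree is generically \emph{maximal}, so $\dim J_c^{\,d}$ is generically \emph{minimal} (you assert the opposite, that $\dim J_c^{\,d}$ attains its maximal value), and in any case ``the generic value of $\dim L_j$ is the largest one attained'' is vacuous unless you exhibit a specific parameter at which $\dim L_j$ is already as large as $\dim(A/I_c)_j$. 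The natural anchor is $c=0$, where $\beta_0$ is the apolarity pairing and $J_0=(x_1^p,\dots,x_{n-1}^p)$ has exactly the right Hilbert function; with that input your specialization argument can be completed, but it is missing from the proposal. Your alternative route via Gorenstein duality also does not work as described: the contravariant form $\beta_c$ pairs the degree-$j$ piece of $\Sym(\h^*)$ with the degree-$j$ piece of $\Sym(\h)$, so it does not pair $L_j$ with $L_{(n-1)(p-1)-j}$; that symmetric pairing is the socle pairing of the Artinian Gorenstein ring $A/I_c$, which is a different structure, and no link between the two is established.

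The paper closes the gap differently and more quickly: it invokes \cite[Proposition 3.4]{BC1}, which says that in this modular setting $h_L(t) = \left(\frac{1-t^p}{1-t}\right)^{n-1} h(t^p)$ for some polynomial $h$ with nonnegative integer coefficients. Since $h(0)=1$, this forces $h_L(t) \ge \left(\frac{1-t^p}{1-t}\right)^{n-1}$ coefficient-wise, which combined with your upper bound gives $h(t)=1$ and hence $I_c=J_c$. If you want a self-contained argument avoiding that citation, the $c=0$ specialization sketched above is the piece you need to supply.
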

\begin{proof}
By \cite[Proposition 3.4]{BC1}, the Hilbert series of $L$ is 
\[
h_L(t) = \left(\frac{1-t^p}{1-t}\right)^{n-1}h(t^p)
\]
for a polynomial $h(t)$ with nonnegative integer coefficients.  On the other hand, by Propositions~\ref{prop:linind} and \ref{prop:ci}, $A/I_c$ is a complete intersection with $n-1$ linearly independent degree $p$ generators $f_1, \ldots, f_{n-1}$. Its Hilbert series is
\[
h_{A/I_c}(t)=\left(\frac{1-t^p}{1-t}\right)^{n-1}.
\]
By Proposition~\ref{prop:ann}, the generators $f_1, \ldots, f_{n - 1}$ of $I_c$ are singular vectors, so $I_c \subseteq J_c$, implying that $h_{A/I_c}(t) \ge h_{A/J_c}(t)$ coefficient-wise.  We conclude that $h(t) = 1$, hence $h_{A/I_c}(t)=h_{A/J_c}(t)$ and $I_c=J_c$, completing the proof.
\end{proof}

\begin{remark}
In the proof of Proposition \ref{prop:ci}, we require that $c$ avoids $\{-1, 1,\dots,(p-1)/2\}$ for Proposition \ref{prop:linind} and that $c$ avoids a non-explicit finite set given by vanishing of a determinantal ideal.  These are the only uses of the assumption that $c$ is generic, so Proposition \ref{prop:ci} and Theorem \ref{thm:main} hold for $c$ avoiding these values.
\end{remark}

\bibliographystyle{alpha}
\bibliography{rca-bib}

\end{document}